\documentclass[letterpaper, 10 pt, conference]{ieeeconf}  

\IEEEoverridecommandlockouts                              

\overrideIEEEmargins                                      
\usepackage{amsmath,amssymb,amsfonts,fouridx}
\usepackage{graphicx} 
\usepackage{mathtools}
\usepackage{cite}
\usepackage{enumerate}
\usepackage[usenames, dvipsnames]{color}
\usepackage[utf8]{inputenc}
\newtheorem{theorem}{Theorem}  
\usepackage{subfigure}

\newcommand*{\QEDA}{\hfill\ensuremath{\blacksquare}}%
\newtheorem{ass}{Assumption}                                                %
\newtheorem{lemma}{Lemma}                                              %
\newtheorem{rem}{Remark}                                                  %
\newtheorem{coro}{Corollary} 
\newtheorem{define}{Definition}                                             %

\newcommand{\mf}{\mathfrak}

\newcommand{\wt}{\widetilde}
\setlength\parindent{0pt} 




\title{\LARGE \bf Partial containment control over signed graphs*\thanks{*Accepted for presentation at the 2019 European Control Conference.}} 
\author{Pietro DeLellis$^\ast$\thanks{$^\ast$Department of Electrical Engineering and Information Technology, University of Naples Federico II, 80125, Naples, Italy.}, Anna DiMeglio$^\ast$%
$^\dagger$\thanks{$^\dagger$Corresponding author; e-mail: anna.dimeglio@unina.it} ,  Franco Garofalo$^\ast $
 and Francesco Lo Iudice$^\ast $
}
\begin{document}
\maketitle

\begin{abstract}
In this paper, we deal with the containment control problem in presence of antagonistic interactions. In particular, we focus on the cases in which it is not possible to contain the entire network due to a constrained number of control signals. In this scenario, we study the problem of selecting the nodes where control signals have to be injected to maximize the number of contained nodes. Leveraging graph condensations, we find a suboptimal and computationally efficient solution to this problem, which can be implemented by solving an integer linear problem. The effectiveness of the selection strategy is illustrated through representative simulations.
\end{abstract}


\section{Introduction}
The problem of coordinating the dynamics of ensembles of agents connected through static or dynamic network topologies has been deeply investigated in the last decades. In particular, departing from the pioneering work of DeGroot in the Seventies \cite{de74}, substantial research effort has been devoted to unravel the mechanisms leading to the emergence of consensus in networks of simple integrators. The problem has
been thoroguhly studied both in continuous and discrete time \cite{olmu04}, on undirected or directed graphs, and in presence of delays \cite{olfa07}. 
Consensus has been also investigated in a
leader-following setting, in which one node, the leader, drives a network of linear systems towards a desired value \cite{nich10}.
Achieving consensus is not the only possible control goal in multi-agent systems. Indeed, in applications of networks of autonomous agents, the objective is often to \emph{contain} a group of agents within a certain area, e.g. not to enter populated areas. Motivated by that, Ji and coworkers introduced the so-called containment control problem, where multiple leaders have to drive a group of mobile agents within a desired convex polytope \cite{jife08}. Later works have further analyzed this problem to account for the presence of directed interactions \cite{care09}, possible switches in the network topology \cite{care12,dedi17}, uncertainty \cite{pewa15}, and higher-order dynamics \cite{cast11,lixi12}.

As noted by Altafini in \cite{al13}, most of the works on consensus and containment control relies on the assumption of cooperation among the agents in the system, as all the network edges are assumed to have positive weights. However, in social network theory, besides cooperative interactions, also antagonism is commonly observed \cite{faal11,wafa94}. A natural setting to describe such interactions is to characterize the network topology through the so-called \emph{signed graphs}, introduced in the Fifties by Harary \cite{ha59} to model the
disliking, indifference, and liking sentiments described by psychologists in social interactions. These considerations motivated a bulk of studies on consensus and containment control over signed graphs \cite{al13,huzh13,lich15,shpr15,xica16,medu16,me17,zuso18}. In particular, in \cite{al13} the author showed that when the graph is \emph{balanced}, bipartite consensus can be achieved, that is, the states of all the agents will have the same modulus, but possibly different signs. These results were later extended to the case of non-strongly connected graphs and discrete-time dynamics \cite{huzh13, lich15, shpr15,xica16,medu16}.  
%
Recently, a first definition of containment control over signed graphs was given in \cite{me17}. Specifically, the author says that a network is contained when the states of its nodes converge towards the convex hull spanned by the leaders and by their symmetric trajectories. Assuming continuous-time dynamics, conditions guaranteeing the achievement of full network containment were achieved. Similar results were obtaibed in \cite{zuso18} for the case of generic linear heterogeneous node dynamics. 

However, in large directed networks where the number of control signals is constrained, it is seldom possible to contain the whole network, as it would require to directly inject a control input in every root strongly connected component (RSCC) of the network. Therefore, 
in this paper, we formulate the partial containment control problem over signed graphs. When the number of control inputs is limited, our goal is to maximize the number of nodes we asymptotically contain. Exploiting two graph condensations, we first derive sufficient conditions to contain the atom of our network, that is, a SCC, and then devise a suboptimal algorithm to efficiently deploy the control inputs. Interestingly, this strategy only relies on information on the network topology and not on the initial state of the nodes, which might be non-accessible. Moreover, we illustrate how the algorithm can be translated into an integer linear program, and we illustrate its effectiveness on a representative numerical testbed.

\section{Mathematical Preliminaries}\label{sec:prel}
\subsection{Signed graphs}
A directed signed graph $\mathcal{G}$ consists of an unsigned directed graph $\mathcal{U}=\{\mathcal{V},\mathcal{E}\}$ and a partial mapping $\sigma:\mathcal{E}\rightarrow\{+,-\}$\cite{za82}. An edge $(i,j)\in\mathcal{E}$ is called positive if $\sigma(i,j)=\{+\}$, while it is called negative otherwise.
We associate to $\mathcal G$ a weighted adjacency matrix $A$, whose $ij$-th element $a_{ij}$ is positive if $(i,j) \in \mathcal{E} \land \sigma(i,j)=\{+\} $, negative if $(i,j) \in \mathcal{E} \land \sigma(i,j)=\{-\} $, and zero otherwise.

Throughout the manuscript, we shall consider signed graphs fulfilling the following assumption.
\begin{ass}\label{ass:adjacency}
$\left| a_{ii}\right|>0 $ and $\sum_{j=1}^n \left|a_{ij}\right|=1$, for all $i=1,\ldots,n$. 
\end{ass}
\begin{define}\label{structural balance}
A directed signed graph $\mathcal{G}$ is structurally balanced if there exists a bipartition $\lbrace \mathcal{V}^1, \mathcal{V}^2\rbrace$ of $\mathcal{V}$, 
such that $a_{ij}\geq 0$, for all $(i,j)\in \mathcal{V}^\theta$ and $a_{ij}\le 0$ for all $(i\in\mathcal{V}^\theta , j\in\mathcal{V}\setminus\mathcal{V}^\theta)$, for all $\theta\in\{1,2\}$. $\mathcal{G}$ is unbalanced otherwise.
\end{define}
Notice that every unsigned graph is structurally balanced with $\mathcal{V}^1=\mathcal{V}$ and $\mathcal{V}^2=\emptyset$.
Following \cite{xica16} and \cite{he14}, we define the \emph{enlarged graph} associated to $\mathcal{G}$ as follows:
\begin{define}\label{def:enlarged_graph}
The enlarged graph $\mathcal{\widetilde{G}}=\{\widetilde{\mathcal{V}},\widetilde{\mathcal{E}} \}$ associated to $\mathcal{G}$ is a (unsigned) directed graph of $2n$ nodes ($\widetilde{\mathcal{V}}=\{1,\ldots,n,1^-,\ldots,n^- \}$) and all positive edges related to that of $\mathcal{G}$ through the adjacency matrix $\widetilde{A}$, whose elements are 
\begin{equation*}\label{en_graph}
\begin{aligned}
\tilde{a}_{ij}= \tilde{a}_{i+N,j+N}=& \max (0,a_{ij})\geq 0,\\
\tilde{a}_{i+N,j}=\tilde{a}_{i,j+N}=& \max(0, -a_{ij}) \geq 0,
\end{aligned}
\end{equation*}
for $i,j=1,\ldots,N$.
\end{define}
\subsection{Some useful lemmata}
\begin{lemma}\label{lem:red_norm_form}
Let us consider a reducible matrix in normal form:
\begin{equation*}\label{eq:red_norm_form}
M=\left[ {\begin{array}{*{20}{c}}
{{M_{1}}}&0&0&0\\
 \vdots & \ddots & \ddots & \vdots \\
0& \cdots &{{M_{q}}}&0\\
{{R_1}}& \cdots &{{R_{q}}}&{{S}}
\end{array}} \right],
\end{equation*}
where $M_{j}$, $j=1,\ldots,q$, are semi-convergent irreducible matrices and $S$ is a convergent matrix. We then have
\begin{equation*}\label{eq:red_asympt}
\lim_{k\rightarrow+\infty}M^k=\left[ {\begin{array}{*{20}{c}}
{{M_{1}^{\infty}}}&0&0&0\\
 \vdots & \ddots & \ddots & \vdots \\
0& \cdots &{{M_{q}^{\infty}}}&0\\
{{\cdots}}& R^*_j &{\cdots}&{0}
\end{array}} \right],
\end{equation*}
where $M_{j}^{\infty}=\lim_{k\rightarrow+\infty}M_j^k$ and $R^*_{j}=(I-S)^{-1} R_j M_j^{\infty}$. Furthermore, if $\lambda=1$ is an eigenvalue of $M_j$, then $M_j^{\infty}=\psi_j\xi_j^T$, where $\xi_j$ and $\psi_j$ are the left and right eigenvectors associated to $\lambda=1$, respectively, scaled so that $\xi_j^T\psi_j=1$. 
\end{lemma}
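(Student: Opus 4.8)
The plan is to exploit the block lower-triangular structure of $M$ and reduce the whole statement to a single computation of matrix powers. First I would collapse the diagonal blocks into $D=\mathrm{diag}(M_1,\dots,M_q)$ and stack the last row of blocks into $R=[R_1\ \cdots\ R_q]$, so that $M$ becomes the $2\times 2$ block matrix $\left[\begin{smallmatrix} D & 0\\ R & S\end{smallmatrix}\right]$. A routine induction on $k$, starting from $M^{k+1}=M^kM$, then gives
$$M^k=\begin{bmatrix} D^k & 0\\ T_k & S^k\end{bmatrix},\qquad T_k=\sum_{i=0}^{k-1}S^i\,R\,D^{\,k-1-i},$$
since the recursion $T_{k+1}=T_kD+S^kR$ reproduces exactly the convolution sum above.

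Next I would pass to the limit block by block. Because each $M_j$ is semi-convergent, $D^k\to D^\infty=\mathrm{diag}(M_1^\infty,\dots,M_q^\infty)$; because $S$ is convergent, $S^k\to 0$ and the Neumann series $\sum_{i\ge 0}S^i=(I-S)^{-1}$ converges. This settles both the diagonal block and the bottom-right block of the claimed limit. Identifying the surviving lower-left block with $(I-S)^{-1}RD^\infty$ and recalling that $R=[R_1\ \cdots\ R_q]$ while $D^\infty$ is block diagonal yields $R_j^\ast=(I-S)^{-1}R_jM_j^\infty$, exactly as stated.

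The technical heart — and the step I expect to be the main obstacle — is proving $T_k\to(I-S)^{-1}RD^\infty$, since $T_k$ is a discrete convolution in which the decaying factor $S^i$ and the converging factor $D^{k-1-i}$ are coupled. I would split $T_k=\big(\sum_{i=0}^{k-1}S^i\big)RD^\infty+\sum_{i=0}^{k-1}S^iR\big(D^{k-1-i}-D^\infty\big)$; the first term tends to $(I-S)^{-1}RD^\infty$ by the Neumann series, and the second must be driven to zero. For that remainder I would fix $\varepsilon>0$, cut the sum at an index $N$ with $\|D^m-D^\infty\|<\varepsilon$ for $m\ge N$, and bound the two pieces separately: the head $\sum_{i=0}^{k-1-N}$ is controlled by $\varepsilon\,\|R\|\sum_{i\ge 0}\|S^i\|<\infty$, while the tail $\sum_{i=k-N}^{k-1}$ is controlled using $\|S^i\|\le C(\rho(S)+\delta)^i$, valid for $\delta$ small enough that $\rho(S)+\delta<1$, together with the uniform bound $\sup_m\|D^m\|<\infty$ guaranteed by semi-convergence; the tail then vanishes as $k\to\infty$ for fixed $N$.

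Finally, for the rank-one characterization I would combine two facts about each block $M_j$. Semi-convergence forces every eigenvalue other than $\lambda=1$ to lie strictly inside the unit disk and makes $\lambda=1$ semisimple, so $\lim_k M_j^k$ equals the spectral projection onto the $\lambda=1$ eigenspace. Irreducibility (with nonnegativity, as holds in our setting) then makes $\lambda=1$ simple by Perron–Frobenius, so that eigenspace is one-dimensional and the projection is the rank-one matrix $\psi_j\xi_j^T$ under the normalization $\xi_j^T\psi_j=1$; hence $M_j^\infty=\psi_j\xi_j^T$, completing the proof.
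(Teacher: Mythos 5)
The paper states this lemma without any proof (unlike Lemmas 2 and 3 it carries no citation either), so there is nothing to compare your argument against; judged on its own, your proof is correct and complete. The $2\times 2$ block reduction, the induction giving $T_k=\sum_{i=0}^{k-1}S^iRD^{k-1-i}$, and the $\varepsilon$--$N$ splitting of the convolution into a head controlled by the Neumann series $\sum_i\|S^i\|<\infty$ and a tail of $N$ terms killed by $\|S^i\|\le C(\rho(S)+\delta)^i$ with $i\ge k-N\to\infty$ are exactly the standard and rigorous way to establish the limit, and the block-diagonal structure of $D^\infty$ correctly yields $R_j^*=(I-S)^{-1}R_jM_j^\infty$. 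The one point worth making explicit is the final rank-one claim: for a general irreducible semi-convergent matrix, semi-convergence makes $\lambda=1$ semisimple but does not by itself make it \emph{simple}, so the spectral projection need not be rank one; you correctly flag that you additionally invoke nonnegativity and Perron--Frobenius. That hypothesis is absent from the lemma as stated, but it does hold where the lemma is used (the blocks arise from the enlarged graph's nonnegative row-stochastic adjacency matrix under Assumption 1), so your proof establishes the statement in the form the paper actually needs, and in doing so exposes an implicit hypothesis the authors left unstated.
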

\begin{lemma}\label{lem:str_bal}
\cite{xica16} Given a strongly connected signed graph $\mathcal{G}$ and its associated enlarged graph $\mathcal{\widetilde{G}}$, 
$\mathcal{G}$ is structurally balanced if and only if $\widetilde{\mathcal{G}}$ is disconnected and composed of two strongly connected components.
\end{lemma}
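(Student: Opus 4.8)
The plan is to prove both implications through a single combinatorial device: the lifting of directed walks of $\mathcal{G}$ to $\widetilde{\mathcal{G}}$. To each node $i$ the enlarged graph associates a positive copy $i$ and a negative copy $i^-$, and by Definition~\ref{def:enlarged_graph} an arc of $\widetilde{\mathcal{G}}$ leaves a given copy of $i$ and reaches the copy of $j$ of the \emph{same} sign when $a_{ij}>0$, and of the \emph{opposite} sign when $a_{ij}<0$. Hence any walk $i\to k_1\to\cdots\to j$ of $\mathcal{G}$ lifts, once a starting copy is fixed, to a unique walk of $\widetilde{\mathcal{G}}$ whose terminal copy is obtained by flipping the sign once for each negative edge traversed; equivalently, the terminal copy is fixed by the product of the edge signs along the walk. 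Two facts will be used throughout: the map $\phi$ exchanging $i\leftrightarrow i^-$ for every $i$ is an automorphism of $\widetilde{\mathcal{G}}$, since $\tilde a_{ij}=\tilde a_{i+N,j+N}$ and $\tilde a_{i+N,j}=\tilde a_{i,j+N}$; and, because $\mathcal{G}$ is strongly connected, some walk joins any ordered pair of its nodes.

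For the direct implication, I would start from a bipartition $\{\mathcal{V}^1,\mathcal{V}^2\}$ certifying structural balance and assign to each copy a label $\ell$, setting $\ell(i)=+1,\ \ell(i^-)=-1$ when $i\in\mathcal{V}^1$ and the reversed labels when $i\in\mathcal{V}^2$. A short case check on the four arc types of Definition~\ref{def:enlarged_graph} shows that every arc of $\widetilde{\mathcal{G}}$ joins copies carrying the same label: a positive edge keeps the label because its endpoints lie in the same $\mathcal{V}^\theta$, a negative edge keeps it because flipping the copy sign compensates the change of part. Consequently $\widetilde{\mathcal{G}}$ splits, with no arc between them, into $C_1=\{\ell=+1\}$ and $C_2=\{\ell=-1\}$, each holding exactly one copy of every node. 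To see that $C_\theta$ is strongly connected, I would lift a connecting walk of $\mathcal{G}$ starting from any copy in $C_\theta$: since no arc leaves $C_\theta$, the lifted walk stays inside it and reaches the unique $C_\theta$-copy of its target, yielding a directed path between any two prescribed nodes of $C_\theta$.

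For the converse I would argue directly, avoiding Harary's cycle criterion. Assume $\widetilde{\mathcal{G}}$ is the disjoint union of two strongly connected components $C_1,C_2$. The automorphism $\phi$ permutes $\{C_1,C_2\}$, and I would first exclude that it fixes them: if $\phi(C_1)=C_1$, then both copies of a reference node $r\in C_1$ lie in $C_1$, and lifting from these two copies the walks of $\mathcal{G}$ reaching an arbitrary node $b$ — walks that cannot leave $C_1$, since no arc does — would place both copies of $b$ in $C_1$, emptying $C_2$. Hence $\phi(C_1)=C_2$, which forces each node to have exactly one copy in each component. I would then define $\mathcal{V}^\theta=\{j:\text{copy }j\in C_\theta\}$ and verify balance arc by arc: a positive edge $a_{ij}>0$ produces the arc $i\to j$, keeping $i$ and $j$ in the same part, whereas a negative edge produces $i\to j^-$, placing $j$ in the part opposite to $i$. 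The delicate point throughout is the bookkeeping of copies under lifting and under $\phi$: one must keep the ``exactly one copy per component'' property and the sign-flip rule perfectly aligned, since this is precisely what converts the topological splitting of $\widetilde{\mathcal{G}}$ into the combinatorial bipartition of $\mathcal{G}$, and back.
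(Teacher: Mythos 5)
Your argument is correct, but note that the paper itself offers no proof of this statement: Lemma~\ref{lem:str_bal} is imported verbatim from \cite{xica16} and used as a black box, so there is no in-paper proof to compare against. Judged on its own merits, your proposal is a sound and self-contained derivation. The forward direction's label $\ell$ (positive copy gets $+1$ on $\mathcal{V}^1$, reversed on $\mathcal{V}^2$) is exactly the right invariant: the four-case check that every arc of $\widetilde{\mathcal{G}}$ preserves $\ell$ works, and the lifting of walks of the strongly connected $\mathcal{G}$ correctly yields strong connectivity of each label class. The converse is also complete: the swap $i\leftrightarrow i^-$ is indeed an automorphism by the symmetry of $\widetilde{A}$, your exclusion of the case $\phi(C_1)=C_1$ (both copies of a reference node in one component would, via the two lifts of a connecting walk, drag both copies of every node into that component) is the one genuinely non-obvious step and you handle it correctly, and the resulting ``one copy per component'' property converts directly into the bipartition certifying balance. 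Your route differs from the standard treatments in the literature, which typically pass through Harary's cycle-sign criterion or the gauge-transformation characterization (existence of a signature matrix $D$ with $DAD\ge 0$); your direct walk-lifting argument is more elementary and has the advantage of proving both directions with a single device, at the modest cost of the careful copy bookkeeping you yourself flag. One cosmetic remark: in the converse you should make explicit that ``disconnected and composed of two strongly connected components'' means there is no arc between $C_1$ and $C_2$ in either direction, since that is what guarantees lifted walks cannot escape a component; you use this fact twice, and it is worth stating once up front.
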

\begin{lemma}\label{lem:str_unbal}
\cite{xica16} Given a strongly connected signed graph $\mathcal{G}$ and its associated enlarged graph $\mathcal{\widetilde{G}}$, $\mathcal{G}$ is structurally unbalanced if and only if $\mathcal{\widetilde{G}}$ is strongly connected.
\end{lemma}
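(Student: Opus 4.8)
The plan is to regard the enlarged graph $\widetilde{\mathcal{G}}$ as a two-layer cover of $\mathcal{G}$ and to track, for any directed walk, the layer in which it terminates. First I would attach to every edge $(i,j)$ of $\mathcal{G}$ its sign $\mathrm{sgn}(a_{ij})\in\{+,-\}$, and to every directed walk $W=(v_0,\dots,v_k)$ the product $s(W)$ of the signs of its edges. Reading off Definition~\ref{def:enlarged_graph}, a positive edge $(i,j)$ lifts to the two edges $i\to j$ and $i^-\to j^-$ (layer preserved), whereas a negative edge lifts to $i\to j^-$ and $i^-\to j$ (layer swapped). A straightforward induction on $k$ then yields the \emph{lift property}: a walk of $\widetilde{\mathcal{G}}$ that projects to $W$ and starts at $i$ ends at $j$ when $s(W)=+$ and at $j^-$ when $s(W)=-$, with the mirrored statement holding for a start at $i^-$. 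This reduces every reachability question in $\widetilde{\mathcal{G}}$ to a question about the existence of walks of prescribed sign in $\mathcal{G}$.

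For the direction ``$\widetilde{\mathcal{G}}$ strongly connected $\Rightarrow$ $\mathcal{G}$ unbalanced'' I would simply invoke Lemma~\ref{lem:str_bal}: were $\mathcal{G}$ structurally balanced, $\widetilde{\mathcal{G}}$ would be disconnected, contradicting strong connectivity; since by Definition~\ref{structural balance} a signed graph is either balanced or unbalanced, $\mathcal{G}$ must be unbalanced.

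The substantive direction is ``$\mathcal{G}$ unbalanced $\Rightarrow$ $\widetilde{\mathcal{G}}$ strongly connected.'' Here I would first use Harary's characterization: for a connected graph, the bipartition of Definition~\ref{structural balance} exists if and only if every closed walk has sign $+$. Unbalance therefore furnishes a closed walk $C$ based at some node $w$ with $s(C)=-$. Now take any two vertices of $\widetilde{\mathcal{G}}$, say lifts of $i$ and $j$. Strong connectivity of $\mathcal{G}$ supplies walks $i\rightsquigarrow w$ and $w\rightsquigarrow j$; concatenating them directly, versus inserting $C$ at $w$, produces two $i$-to-$j$ walks whose signs differ by the factor $s(C)=-$. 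By the lift property these two walks reach the two different layers of $j$, so both lifts of $j$ are reachable from either lift of $i$. Hence $\widetilde{\mathcal{G}}$ is strongly connected.

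The main obstacle is the Harary-type step, i.e. proving that the bipartition of Definition~\ref{structural balance} is equivalent to all closed walks being sign-positive. For one implication I would set $s_i=+1$ on $\mathcal{V}^1$ and $s_i=-1$ on $\mathcal{V}^2$, obtaining $\mathrm{sgn}(a_{ij})=s_is_j$, so that any closed walk telescopes to sign $+$. For the converse I would fix a root $r$ and define $s_i$ as the sign of an arbitrary walk $r\rightsquigarrow i$; this is well defined precisely because closed walks are sign-positive (using strong connectivity to supply a return walk $i\rightsquigarrow r$, so that two competing $r$-to-$i$ walks close up into sign-positive loops), after which $\{\,i:s_i=+1\,\}$ and $\{\,i:s_i=-1\,\}$ are read off as the required bipartition. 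Care is needed to phrase everything in terms of closed walks rather than simple cycles, and to keep the edge-direction convention of the adjacency matrix consistent between $\mathcal{G}$ and $\widetilde{\mathcal{G}}$; once that bookkeeping is fixed, the lift property and the sign-flipping argument close the proof.
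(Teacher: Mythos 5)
Your argument is correct, but note that the paper does not prove this lemma at all: it is imported verbatim from \cite{xica16}, so there is no in-paper proof to compare against. What you have written is the classical signed-double-cover argument (the enlarged graph of Definition~\ref{def:enlarged_graph} is exactly the double cover of $\mathcal{G}$ determined by the edge signs), and all the pieces fit: the lift property follows by the induction you describe; the ``balanced $\Rightarrow$ disconnected'' half is indeed free from Lemma~\ref{lem:str_bal} together with the balanced/unbalanced dichotomy of Definition~\ref{structural balance}; and the substantive half correctly combines Harary's walk-sign characterization with strong connectivity of $\mathcal{G}$ to route any lift of $i$ to either lift of $j$ by optionally splicing in the negative closed walk $C$. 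Your treatment of the Harary step is also the right one for the directed setting: well-definedness of the potential $s_i$ genuinely needs the return walk $i\rightsquigarrow r$ supplied by strong connectivity, and your telescoping identity $\mathrm{sgn}(a_{ij})=s_is_j$ recovers precisely the bipartition required by Definition~\ref{structural balance} (including the case of negative self-loops permitted by Assumption~\ref{ass:adjacency}, which immediately yield a negative closed walk of length one). The only caveat is the one you already flagged yourself: the paper's indexing of $a_{ij}$ versus the edge $(i,j)$ and the neighborhood $\mathcal{N}_i$ is not entirely consistent, so the direction convention must be fixed once and used uniformly in the lift property; this is bookkeeping, not a gap.
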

\subsection{Graph condensations}\label{subsec:cond}
\begin{define}\label{def:cond_fun}
Given any pair of vertex sets $\{\mathcal{V},\mathcal{V}'\}$, with $\left|\mathcal{V}\right|\ge \left|\mathcal{V}'\right|$, any (single-valued) function $f:\mathcal{V}\rightarrow\mathcal{V}'$ is called a {\normalfont condensing function}. Moreover,
$$\mathcal{V}_i:=\{ t\in\mathcal{V}:f(t)=i \},\quad i\in\mathcal{V}'.
$$
\end{define}
\begin{define}\label{def:classical_cond}
Let us consider a graph $\mathcal{G} = (\mathcal{V},\mathcal{E})$, a vertex set $\mathcal{V}'$, a condensing function $f:\mathcal{V}\rightarrow\mathcal{V}'$, and the edge set $\mathcal{E}'=\{(i\in\mathcal{V}',j\in\mathcal{V}'),i\ne j|\exists(t,u)\in\mathcal{E} |f(t)=i,f(u)=j \}$. The graph $\mathcal{G}'=\{\mathcal{V}',\mathcal{E}'\}$ is the condensation of $\mathcal{G}$ induced by $f$.
\end{define}
\begin{define}\label{def:novel_cond}
The classic condensation $\mathcal{G}^c$ of a graph $\mathcal{G}$ is the condensation of $\mathcal{G}$ induced by the condensing function $f^c$ that associates each node of $\mathcal{G}$ to the strongly connected component it belongs to.
\end{define}

Now, we introduce a novel condensation of a graph $\mathcal{G}$, denoted as the {\it signed condensation} $\mathcal{G}^s$ of $\mathcal{G}$:
\begin{define}\label{def:signed_cond}
The signed condensation $\mathcal{G}^s$ of a graph $\mathcal G$ is the condensation of $\mathcal{G}$ induced by the condensing function $f^s$ that associates each node of $\mathcal{G}$ to the strongly connected component of $\widetilde{\mathcal{G}}$ it belongs to.
\end{define}
Notice that if $\mathcal{G}$ is unsigned, then $\mathcal{G}^c= \mathcal{G}^s$. The correspondences between the diverse condensations are illustrated in Figure \ref{fig:condensations}. Moreover, we observe that the signed condensation $\mathcal{G}^s$ is a directed acyclic graph. From now on, we call directed acyclic condensation every condensation that is a directed acyclic graph (DAG). We can now give the following definition.
\begin{figure}
\includegraphics[width=\columnwidth]{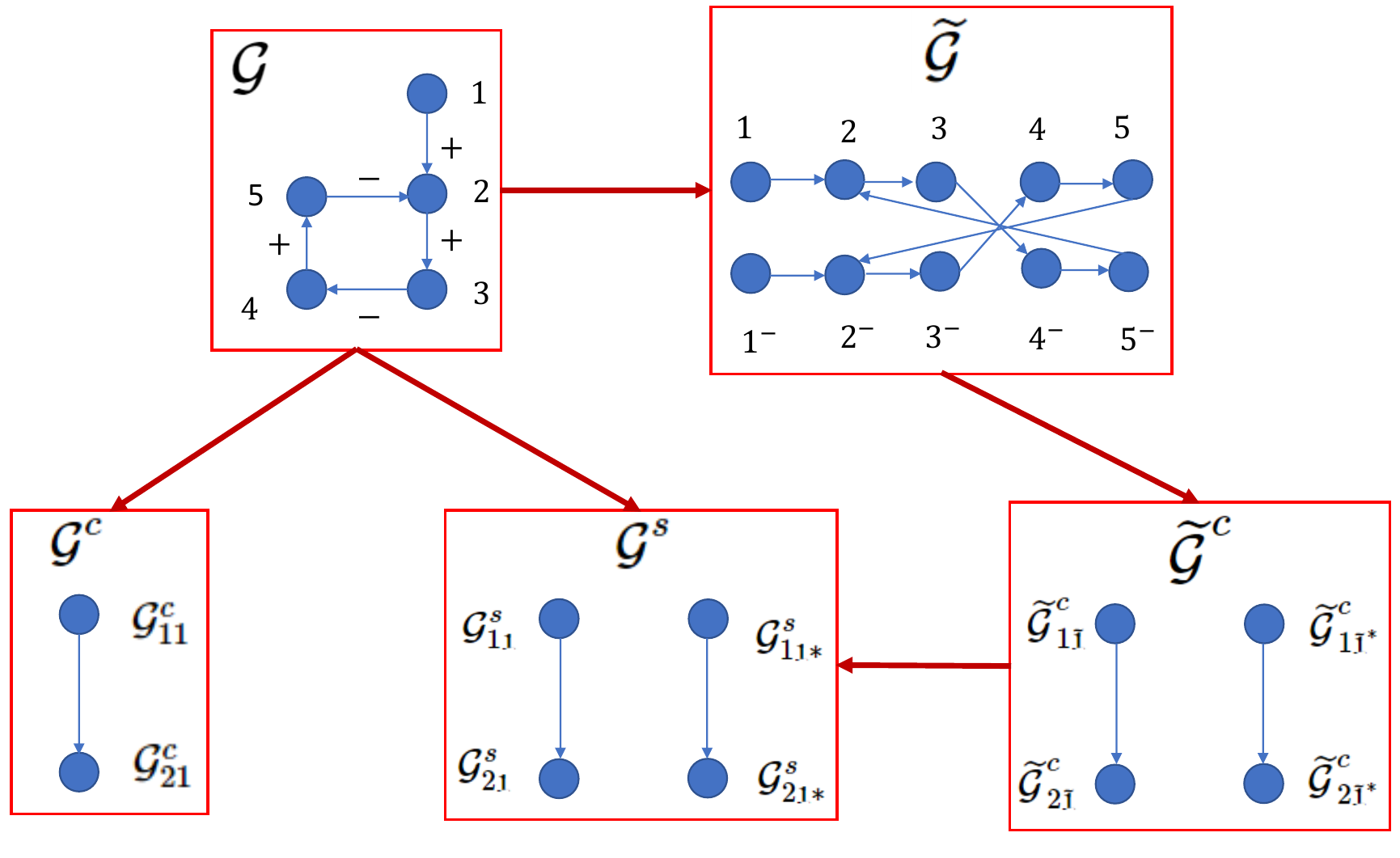}
\caption{The correspondences between the various condensations and their decomposition in levels are illustrated with reference to a sample signed graph $\mathcal{G}$. }\label{fig:condensations}
\end{figure}
\begin{define}\label{def:dec_levels}
Let us consider a directed acyclic condensation $\mathcal{G}^d = (\mathcal{V}^d, \mathcal{E}^d)$ of a graph $\mathcal{G}=(\mathcal V,\mathcal E)$ induced by a condensing function $f^d$.
Node $i\in \mathcal{V}^d$ belongs to level $1$ if $\nexists j: (j,i) \in \mathcal{E}^d$. Furthemore, 
a node $i\in \mathcal{V}^d$ belongs to the $l(>1)$-th level of $\mathcal{V}^d$ if
$$\forall (j,i)\in \mathcal{E}^d ;\ j\in  \mathrm{level}  \ p<l.$$
Moreover, the total number of levels is denoted by $\ell_d$ and the number of nodes of  $\mathcal{G}^d$ in a given level $l$ is $n_l^d$.  
\end{define}
Given a directed acyclic condensation $\mathcal{G}^d$ of $\mathcal{G}$, we  associate to each node in $\mathcal{V}^d$ a pair of indexes $(a,b)$: the first will indicate the level the node belongs to and the second its (random) ranking in that level, see Figure \ref{fig:condensations}. Accordingly, we can define a function $g^d$ that associates to each $(a,b)$ the corresponding node of $\mathcal{V}^d$. Now, we can partition (and sort) the set of nodes of $\mathcal{V}$ as
\begin{equation}
\label{eq:generic_sorting}
\mathcal{V}=\{\mathcal{V}_{11}^d,\ldots,\mathcal{V}_{1n_1^d}^d,\ldots,\mathcal{V}_{\ell_d n_\ell^d}^d\},
\end{equation}
where
\begin{equation}
\label{eq:generic_subgraph_level}
\mathcal{V}_{ij}^d:=\{t\in\mathcal{V}:f^d(t)=g^d(i,j)\}.
\end{equation}
Moreover, we denote $\mathcal{G}_{ij}^d\subseteq\mathcal{G}$ the subgraph induced by $\mathcal{V}_{ij}^d$. Consequently, we indicate by $\mathcal{G}_{ij}^c$ a strongly connected component (SCC) of $\mathcal{G}$, for all $i=1,\ldots,\ell_c$, $j=1,\ldots,n_i^c$, and that $\mathcal{G}^c_{11},\ldots,\mathcal{G}^c_{1h}, \ldots, \mathcal{G}_{1 n_1^c}$ are its $n_1^c\ge 1$ aperiodic root strongly connected components (RSCCs). Notice that the number of levels of $\mathcal{G}^c$, $\widetilde{\mathcal{G}}^c$ and $\mathcal{G}^s$ is the same, that is, $\ell_c=\tilde{\ell}_c=\ell_s:=\ell$. Moreover, for all $l = 1,\dots,\ell$, $n_l^c \leq n^s_l \leq \tilde{n}_l^c$. Moreover, any SCC of $\mathcal{G}$ can be classified as of 
\begin{enumerate}
\item {\it type 1} if it has no negative weights;
\item {\it type 2} if it has at least one negative weight and is structurally balanced;
\item {\it type 3} if it has at least one negative weight and is structurally unbalanced.
\end{enumerate}
\begin{rem}\label{rem:associations}
For all $h=1,\ldots,n_l$, $l=1,\ldots,\ell$, we associate to the $h$-th node of the $l$-th level of $\mathcal{G}^c$
\begin{itemize}
\item the $\mf h$-th ($\tilde{\mf h}$-th) and the $\mf h^*$-th ($\tilde{\mf h}^*$-th) nodes of the $l$-th level of $\mathcal{G}^s$ ($\widetilde{\mathcal{G}}^c$) such that  
$\mathcal{V}^c_{lh}=\mathcal{V}^s_{l \mf h}\cup\mathcal{V}^s_{l \mf h^*}\subset\widetilde{\mathcal{V}}^c_{l\tilde{\mf h}}\cup\widetilde{\mathcal{V}}^c_{l\tilde{\mf h}^*},$
if $\mathcal{G}^c_{lh}$ is of type 1 or type 2;
\item the $\mf h$-th ($\tilde{\mf h}$-th) node of the $l$-th level of $\mathcal{G}^s$ such that  
$\mathcal{V}^c_{lh}=\mathcal{V}^s_{l \mf h} \subseteq \widetilde{\mathcal{V}}^c_{l\tilde{\mf h}},$
if $\mathcal{G}^c_{lh}$ is of type 3.
\end{itemize}
Moreover, we associate to the $\tilde{\mf h}$-th node of the $l$-th level of $\widetilde{\mathcal{G}}^c$ the $h$-th node of the $l$-th level of $\mathcal{G}^c$ such that
$$\widetilde{\mathcal{V}}_{l \tilde{\mf h}}^c\cap\mathcal{V}_{lh}^c\ne\emptyset. $$
\end{rem}
These associations between the nodes of the condensations are clearly illustrated in Figure \ref{fig:condensations}.
\section{Problem formulation}
Let us consider a signed graph $\mathcal G$ with $n$ nodes and associated weighted adjacency matrix $A$. 
A subset $\mathcal{C}\subset \mathcal{V}$ comprises the $m$ \emph{leaders} (sometimes also denoted pinners \cite{liwa04, podi08, tude14, liba16}, depending on the context), that is, nodes that have no incoming links.
Denoting $x_i\in\mathbb{R}$ the state of the $i$-th node, the dynamics over the signed graph $\mathcal{G}$ are described by
\begin{equation}\label{eq:signed_uncontrolled}
x_i(k+1)=x_i(k)+\sum_{j =1}^N  a_{ij} \left(x_j(k)-\mathrm{sign}(a_{ij})x_i(k)\right), 
\end{equation}
for all $i=1,\ldots,n$, or, equivalently, by
\begin{equation*}
x(k+1)=Ax(k),
\end{equation*}
where $x=[x_1,\ldots,x_n]^T$ is the vector of the nodes' states. 

In this paper, we focus on the case in which
\begin{equation*}
\left|a_{ij}\right|=\left\{\begin{array}{ll}
\frac{1}{\left|\mathcal{N}_i\right|} & \text{if } j\in\mathcal{N}_i,\\
0 & \text{otherwise,} 
\end{array}\right.
\end{equation*}
with $\mathcal{N}_i=\{j\in\mathcal{V}:(j,i)\in\mathcal{E}\}$ being the set of neighbors' of $i$,
but the results given in the following can be easily extended to alternative rules for computing $a_{ij}$ that are consistent with Assumption \ref{ass:adjacency}. 

Notice that $x_i(k+1)=x_i(k)=x_i(0)$ for all $i\in\mathcal{C}$.
From \cite{me17}, we give the following definition of containment in signed graphs.
\begin{define}
A node $i\in\mathcal{V}-\mathcal{C}$ is asymptotically signed contained when
\begin{equation}\label{eq:cont_s_i}
\limsup_{k\rightarrow+\infty}\left| x_i(k)\right|\le \max_{j\in\mathcal{C}} \left|x_j(0)\right|,
\end{equation}
\end{define}
\begin{define}\label{def:partial}
Network \eqref{eq:signed_uncontrolled} is $q$-partially signed contained if there exist a subset $\mathcal{Q}\subseteq\mathcal{V}$ of cardinality $q$ such that all the nodes in $\mathcal{Q}$ are asymptotically contained. If $q=n-m$, then network \eqref{eq:signed_uncontrolled} is signed contained.
\end{define}
Let us denote by $\mathcal L$ the set of nodes directly controlled by the leaders, that is,
$$ \mathcal L=\left\{ i\in\mathcal V\ |\ \exists\ a_{ji}>0,j\in\mathcal C \right\}.$$
Then, we can define $\mathcal{K}(\mathcal L):=\{i\in \mathcal{V}\ |\ \text{eq. }\eqref{eq:cont_s_i} \text{ holds}\}$, as the set of asymptotically contained nodes. For a given cardinality, say $d$, of the set $\mathcal L$, the partial containment control problem consists in finding optimal selection $\mathcal{L}^*(d)$ that maximizes the number of contained nodes, that is,
\begin{equation}
\label{prob:targ_cont}
\begin{aligned}
&\mathcal L^*(d)= \arg \max_{\mathcal L} \left| \mathcal{K}(\mathcal L)\right|\\
& \qquad \text{s.t.} \quad \left| \mathcal L \right| = d.
\end{aligned}
\end{equation} 
We observe that the numerical solution of this problem for $d>1$, although conceptually simple, would require to test for a number of alternative selections of the pinned nodes that is in the order of $n!$. An extensive search of the optimal solution is therefore computationally prohibitive even for relatively small networks. In what follows, we propose a computationally efficient heuristic approach to find a suboptimal solution of problem \eqref{prob:targ_cont}. 
\section{Convergence analysis}
Before giving our main results, we give  some relevant notation. Specifically, for the $h$-th SCC of the $l$-th level, we introduce the stack vector $x_{lh}$ of the states $\lbrace x_i \rbrace_{i\in \mathcal{V}^c_{lh}} $, and the vector
\begin{equation}
\label{eq:y}
y_{lh}(k):=\left[x_{lh}(k)^T, -x_{lh}(k)^T\right]^T.
\end{equation}
If $\mathcal{G}^c_{lh}$ is of type 2, $y_{hl}$ can be viewed as the vector containing all the states of the nodes in $\widetilde{\mathcal{V}}^c_{l \tilde{\mf h}}\cup  \widetilde{\mathcal{V}}^c_{l \tilde{\mf h}^* }$. From Lemma \ref{lem:str_bal}, $\widetilde{\mathcal{G}}_{lh}$ is composed by two disconnected SCCs. 
Therefore, we can find a permutation matrix $T_{lh}$ such that, defining $z_{lh}(k)=T_{lh} y_{lh}(k)=[z_{l \tilde{\mf h}}(k)^T z_{l\tilde{\mf h}^*}(k)^T ]^T$, we can write 
\begin{equation}\label{eq_zeta}
z_{lh}(k+1)= \begin{bmatrix} Z_{l\tilde{\mf h}} & 0 \\ 0 & Z_{l\tilde{\mf h}^*} \end{bmatrix} z_{lh}(k),
\end{equation} 
where $Z_{l\tilde{\mf h}}$ and $Z_{l\tilde{\mf h}^*}$ are the submatrices extracted from $\widetilde{A}$ associated to the nodes in $\widetilde{\mathcal{V}}_{l\tilde{\mf h}}^c$ and in $\widetilde{\mathcal{V}}^c_{l \tilde{\mf h}^*}$.
In what follows, for any node $h$ of level $l$ in $\mathcal{G}^c$ corresponding to a type 1 SCC of $\mathcal{G}$,
we indicate with $\xi_{lh}$ the left eigenvector associated to the unique eigenvalue $\lambda=1$ of block ${A}_{lh}$ of matrix $A$ in eq. \eqref{eq:signed_uncontrolled}, while, given a type 2 SCC $\mathcal{G}^c_{lh}$, we denote 
$\wt{\xi}_{l\tilde{\mf h}}$ ($\wt{\xi}_{l \tilde{\mf h}^*}$) the left eigenvector associated to the unique eigenvalue $\lambda=1$ of $Z_{l\tilde{\mf h}}$ ($Z_{l \tilde{\mf h}^*}$). 
%


By exploiting the condensations introduced in Section \ref{subsec:cond}, here we explore the network level by level, to finally provide an algorithm that computes the steady-state configuration of any SCC in the graph.
Let us start by characterizing the asymptotic behaviors of the nodes in the RSCCs (i.e. in the level 1 of $\mathcal{G}^c$). 
\begin{theorem}\label{thm:1}
For all $h=1,\ldots,n_1^c$,
\begin{itemize}
\item if $\mathcal G_{1h}^c$ is of \emph{type 1}, then
\begin{equation}
\lim_{k\rightarrow+\infty}x_i(k)= \xi_{1 h}^T x_{1h}(0),\qquad  \ \forall i\in \mathcal{V}^c_{1h}\label{eq:rscc_consensus}
\end{equation} 
\item if $\mathcal G_{1h}^c$ is of \emph{type 2}, the SCC polarizes and
\begin{equation}\label{eq:rscc_neg_sb}
\begin{aligned}
\lim_{k\rightarrow+\infty}x_i(k)&=\wt{\xi}_{1 \tilde{\mathfrak h}}^Tz_{1 \tilde{\mathfrak h}}(0) \qquad\quad\ \ \forall i\in \mathcal{V}^s_{1 \mathfrak h}\\
\lim_{k\rightarrow+\infty}x_i(k)&=-\wt{\xi}_{1 \tilde{\mathfrak h}^*}^Tz_{1 \tilde{\mathfrak h}^*}(0) \qquad \forall i\in \mathcal{V}^s_{1 \tilde{\mathfrak h}^*}
\end{aligned}
\end{equation}
\item if $\mathcal G_{1h}^c$ is of \emph{type 3}, then
\begin{equation}
\label{eq:rscc_neg_nsb}
\lim_{k\rightarrow +\infty}{x}_{1h}(k)=0.
\end{equation}
\end{itemize}
\end{theorem}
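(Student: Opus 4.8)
The plan is to reduce all three cases to the convergence of a nonnegative, row-stochastic, primitive matrix, for which Lemma~\ref{lem:red_norm_form}, specialized to a single irreducible block (with $q=1$ and $S$ absent), supplies the limit. The observation common to every case is that an RSCC has no incoming edges from the rest of $\mathcal{G}$, so for $i\in\mathcal{V}^c_{1h}$ we have $a_{ij}=0$ whenever $j\notin\mathcal{V}^c_{1h}$, and the restriction of \eqref{eq:signed_uncontrolled} to the component is autonomous: $x_{1h}(k+1)=A_{1h}x_{1h}(k)$, where $A_{1h}$ is the principal submatrix of $A$ indexed by $\mathcal{V}^c_{1h}$. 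By Assumption~\ref{ass:adjacency} the entries of each row of $A_{1h}$ sum in absolute value to $1$ and the diagonal entries are nonzero; since $\mathcal{G}^c_{1h}$ is strongly connected, $A_{1h}$ is irreducible, and the self-loops induced by the nonzero diagonal make every such block aperiodic, hence primitive whenever it is nonnegative.

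For a \emph{type 1} RSCC the matrix $A_{1h}$ is itself nonnegative, so it is an irreducible, primitive, row-stochastic matrix. I would note that $\mathbf{1}$ is the right eigenvector for $\lambda=1$ and $\xi_{1h}$ the corresponding left eigenvector, normalized so that $\xi_{1h}^T\mathbf{1}=1$. Lemma~\ref{lem:red_norm_form} then gives $A_{1h}^{\infty}=\mathbf{1}\,\xi_{1h}^T$, whence $x_{1h}(k)\to\mathbf{1}\,\xi_{1h}^Tx_{1h}(0)$, i.e. every node of the component reaches the common value \eqref{eq:rscc_consensus}.

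For the remaining two types the negative weights block a direct application, so I would lift the dynamics to the enlarged graph. A short computation using Definition~\ref{def:enlarged_graph} shows that the doubled state $y_{1h}=[x_{1h}^T,-x_{1h}^T]^T$ obeys $y_{1h}(k+1)=\wt{A}_{1h}y_{1h}(k)$, and that $\wt{A}_{1h}$ is nonnegative and again row-stochastic, since each of its rows collects the positive and negative parts of a row of $A_{1h}$, whose absolute values sum to $1$. In the \emph{type 3} case $\mathcal{G}^c_{1h}$ is structurally unbalanced, so by Lemma~\ref{lem:str_unbal} the enlarged block is strongly connected, hence $\wt{A}_{1h}$ is irreducible and primitive. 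The same Perron--Frobenius argument forces every entry of $y_{1h}(k)$ to converge to one common value $c$; but the mirror structure $y_{1h}=[x_{1h}^T,-x_{1h}^T]^T$ imposes $c=-c$, so $c=0$ and $x_{1h}(k)\to 0$, which is \eqref{eq:rscc_neg_nsb}.

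Finally, in the \emph{type 2} case $\mathcal{G}^c_{1h}$ is structurally balanced, so by Lemma~\ref{lem:str_bal} the enlarged block decomposes into two strongly connected components; the permutation $T_{1h}$ block-diagonalizes the dynamics as in \eqref{eq_zeta}, with each diagonal block $Z_{1\tilde{\mf h}}$, $Z_{1\tilde{\mf h}^*}$ irreducible, primitive, and row-stochastic. Perron--Frobenius applied to each block makes all of its components converge to the single consensus value $\wt{\xi}_{1\tilde{\mf h}}^Tz_{1\tilde{\mf h}}(0)$, respectively $\wt{\xi}_{1\tilde{\mf h}^*}^Tz_{1\tilde{\mf h}^*}(0)$. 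Recovering the limits of the original states from each block through the node correspondence of Remark~\ref{rem:associations} and the definition of $z_{lh}$ then yields precisely the two opposite-signed values displayed in \eqref{eq:rscc_neg_sb}, so that the RSCC polarizes. I expect the genuine difficulty to lie entirely in this last bookkeeping step: tracking, via $T_{1h}$ and the mirror symmetry of $\wt{\mathcal{G}}$, which copy ($x_i$ or $-x_i$) of each node lands in which component, so as to pin down the signs in \eqref{eq:rscc_neg_sb}. The convergence itself is routine once the primitivity of each irreducible stochastic block has been established.
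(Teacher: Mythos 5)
Your proposal is correct and follows essentially the same route as the paper's (sketched) proof: decoupling of the first-level SCC dynamics, classical Perron--Frobenius/consensus arguments (Lemma~\ref{lem:red_norm_form}) for type 1, and the lift to the enlarged graph combined with Lemmas~\ref{lem:str_bal} and~\ref{lem:str_unbal} for types 2 and 3. The only detail worth making explicit is the primitivity of the lifted block $\wt A_{1h}$ in the type 3 case, since its diagonal entries are $\max(0,a_{ii})$ and may vanish when $a_{ii}<0$; the paper implicitly sidesteps this by restricting attention to aperiodic RSCCs.
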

\vspace{0.2cm}

\qquad \textit{Sketch of the proof:}
the proof exploits the fact that the dynamics of the nodes in an SCC of the first level are independent of the dynamics of the nodes not belonging to that SCC. Accordingly, the steady-state values of the nodes in type 1 SCCs can be studied using classic results on consensus, while that of the nodes in type 2 or type 3 SCCs can obtained leveraging results on balanced and unbalanced signed graphs, respectively.\qquad \qquad
{\small\QEDA}
\vspace{0.1cm}

Next, we define the upstream and the downstream of a node of a DAG.

\vspace{0.1cm}
\begin{define}\label{def:upstream}
For each node $i$ of a directed acyclic graph, its upstream (downstream) is the set of nodes, including $i$ itself, from which $i$ is reachable (which $i$ can reach) through a directed path. Moreover, we denote with $\Upsilon_{lh}^{\mathcal{G}^d}$ the upstream of the node $lh$ of $\mathcal{G}^d$.
\end{define}
For any node $lh$ of $\mathcal{G}^c$, $\delta_i(lh)$ is the number of nodes of the $i$-th level of $\mathcal{G}^c$ that are in the upstream of $lh$, for $i=1,\ldots,l-1$. Furthermore, we define the set $\mathcal{J}_{i}(lh):= \{j_1, \ldots, j_{\delta_i} \}$ as the set of nodes of level $i$ that are in the upstream of node $l h$, $i=1,\ldots,l-1$. Set $\mathcal{J}_{i}(lh)$ can be partitioned as follows:
\begin{equation*}
\mathcal{J}_{i}(lh)=\left\{\mathcal{J}_{i1}(lh),\mathcal{J}_{i2}(lh),\mathcal{J}_{i3}(lh)\right\},
\end{equation*}
where $\mathcal{J}_{it}(lh)=\left\{\alpha\in\mathcal{J}_{it}(lh)\ |\ \mathcal{G}_{i\alpha}^c \text{ is type } t \right\}$, $t=1,2,3$.


We now give an algorithmic procedure to compute the steady-state values of the states of the nodes belonging to a generic SCC of $\mathcal{G}$.

\vspace{0.1cm}
\begin{theorem}\label{thm:algo}
For all $l=2,\ldots,\ell$, $h=1,\ldots,n_l^c$, the steady-state values $\bar{x}_{lh}$ of the nodes in $\mathcal{G}_{lh}^c$ can be computed through the following algorithm
\begin{equation}\label{eq:algo}
\begin{aligned}
&\bar{x}_{1p}=\left\{ \begin{array}{ll}
\left[\begin{array}{l}
+\wt{\xi}^T_{1\tilde{\mathfrak p}}z_{1\tilde{\mathfrak p}}(0)\mathbf{1}_{|\mathcal{V}_{1\tilde{\mathfrak p}}^s|}  \\
-\wt{\xi}^T_{1\tilde{\mathfrak p}}z_{1\tilde{\mathfrak p}^*}(0)\mathbf{1}_{|\mathcal{V}_{1\tilde{\mathfrak p}^*}^s|}
\end{array}\right] & \text{if } \mathcal{G}_{1p}^c \text{ is of type 1}\\
\xi_{1p}^Tx_{1p}(0)\mathbf{1}_{|\mathcal{V}_{1p}^c|} & \text{if } \mathcal{G}_{1p}^c \text{ is of type 2}\\
0 & \text{if } \mathcal{G}_{1p}^c \text{ is of type 3}
\end{array}
\right., \\
&\qquad \forall p \in\mathcal{J}_{1}(hl),\\
&\bar{x}_{sp}=\left( I-A_{sp}\right)^{-1}\sum_{\lambda=1}^{s-1}\sum_{i\in\mathcal{J}_\lambda (sp)}A_{sp,\lambda i}\bar{x}_{\lambda i}, \\
&\qquad\forall s=2,\ldots,l, p\in\mathcal{J}_{s}(hl).
\end{aligned}
\end{equation}
\end{theorem}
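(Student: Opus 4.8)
The plan is to leverage the level decomposition of the acyclic condensation $\mathcal{G}^c$ to bring $A$ into block-lower-triangular form and then to propagate convergence one level at a time, reading off each steady state as the fixed point of a driven linear recursion. After sorting the nodes as in \eqref{eq:generic_sorting}, the level structure of Definition \ref{def:dec_levels} guarantees that the SCC $\mathcal{G}^c_{sp}$ receives incoming edges only from strictly lower levels, and only from SCCs in its upstream $\Upsilon_{sp}^{\mathcal{G}^c}$. Writing $A_{sp}$ for the diagonal block describing the internal dynamics of $\mathcal{G}^c_{sp}$ and $A_{sp,\lambda i}$ for the block coupling $\mathcal{G}^c_{\lambda i}$ into $\mathcal{G}^c_{sp}$, the block row of $A$ indexed by $sp$ yields
\begin{equation*}
x_{sp}(k+1)=A_{sp}x_{sp}(k)+\sum_{\lambda=1}^{s-1}\sum_{i\in\mathcal{J}_\lambda(sp)}A_{sp,\lambda i}x_{\lambda i}(k),
\end{equation*}
where the off-diagonal blocks vanish outside the upstream, which is why the inner sum ranges only over $\mathcal{J}_\lambda(sp)$. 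This is exactly the forced system whose stationary response the algorithm computes.

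The crucial structural fact I would establish is that, for every level $s\ge2$, the diagonal block $A_{sp}$ is convergent, so that $I-A_{sp}$ is nonsingular. Passing to the entrywise modulus $|A_{sp}|$, this matrix is nonnegative and irreducible because $\mathcal{G}^c_{sp}$ is strongly connected. By Assumption \ref{ass:adjacency} each of its row sums is at most $1$, and since every SCC of level $s\ge2$ has at least one edge entering it from a lower level, there is a node $i\in\mathcal{V}^c_{sp}$ with $\sum_{j\in\mathcal{V}^c_{sp}}|a_{ij}|<1$, i.e. $|A_{sp}|$ has a deficient row. For a nonnegative irreducible matrix all of whose row sums are $\le1$ and at least one of which is $<1$, Perron--Frobenius theory gives $\rho(|A_{sp}|)<1$, whence $\rho(A_{sp})\le\rho(|A_{sp}|)<1$.

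With this in hand I would induct on the level. The base case $s=1$ is Theorem \ref{thm:1}, which furnishes the constant limits $\bar{x}_{1p}$ listed in the first branch of \eqref{eq:algo}. For the step, assume every $x_{\lambda i}(k)$ with $\lambda<s$ converges to the constant $\bar{x}_{\lambda i}$. Then the exogenous input $u_{sp}(k):=\sum_{\lambda<s}\sum_{i\in\mathcal{J}_\lambda(sp)}A_{sp,\lambda i}x_{\lambda i}(k)$ converges to $\bar{u}_{sp}:=\sum_{\lambda<s}\sum_{i\in\mathcal{J}_\lambda(sp)}A_{sp,\lambda i}\bar{x}_{\lambda i}$. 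Setting $\bar{x}_{sp}:=(I-A_{sp})^{-1}\bar{u}_{sp}$ and $e_{sp}(k):=x_{sp}(k)-\bar{x}_{sp}$, the recursion reduces to $e_{sp}(k+1)=A_{sp}e_{sp}(k)+(u_{sp}(k)-\bar{u}_{sp})$; since $\|A_{sp}^k\|\to0$ geometrically and $u_{sp}(k)-\bar{u}_{sp}\to0$, the homogeneous term and the convolution term both vanish, so $x_{sp}(k)\to\bar{x}_{sp}$. This is precisely the second branch of \eqref{eq:algo}, and iterating up to $s=l$, $p=h$ delivers $\bar{x}_{lh}$.

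The main obstacle is the convergence claim $\rho(A_{sp})<1$ for the signed, non-stochastic blocks: the sign pattern rules out a direct stochastic-matrix argument, so the reduction to the nonnegative modulus together with the irreducibility-plus-deficient-row estimate is what carries the proof. The remaining part --- that an asymptotically constant input into a convergent recursion drives the state to the shifted fixed point $(I-A_{sp})^{-1}\bar{u}_{sp}$ --- is routine and in fact parallels the expression $R^*_j=(I-S)^{-1}R_jM_j^\infty$ of Lemma \ref{lem:red_norm_form}, which could alternatively be applied level by level to reach the same recursion.
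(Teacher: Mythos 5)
Your proposal is correct and follows essentially the same route as the paper's (sketched) proof: the initialization is a direct application of Theorem \ref{thm:1}, and the expression for $\bar{x}_{sp}$ is obtained by induction on the level, with the invertibility of $I-A_{sp}$ secured by the sub-stochasticity argument that underlies the convergence of the block $S$ in Lemma \ref{lem:red_norm_form}. Your explicit verification that $\rho(A_{sp})\le\rho(|A_{sp}|)<1$ via irreducibility and a deficient row usefully fills in a detail the paper leaves implicit.
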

\vspace{0.2cm}

\qquad \textit{Sketch of the proof:}
the algorithm initialization is a direct application of Theorem \ref{thm:1}, while the expression of $\bar{x}_{sp}$ can be derived by induction. \quad {\small \QEDA}
\begin{coro}
If $\cup_{h=1}^{n_1^c}\mathcal{V}_{1h}=\mathcal C$, then network \eqref{eq:signed_uncontrolled} is signed contained.
\end{coro}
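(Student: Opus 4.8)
The plan is to show that the hypothesis forces every non-leader node to converge to a value of modulus at most $B:=\max_{j\in\mathcal C}|x_j(0)|$; since the set $\mathcal Q=\mathcal V\setminus\mathcal C$ of non-leaders has cardinality $n-m$, this is exactly the assertion that network \eqref{eq:signed_uncontrolled} is signed contained in the sense of Definition~\ref{def:partial}. Because Theorem~\ref{thm:algo} already guarantees that every node converges to the steady-state value produced by algorithm \eqref{eq:algo}, it suffices to prove that $\|\bar x_{lh}\|_\infty\le B$ for every level $l$ and every SCC $h$, after which $\limsup_k|x_i(k)|=|\bar x_i|\le B$ for each non-leader $i$, which is precisely asymptotic signed containment.

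First I would settle the base case $l=1$. Each leader has no incoming links, hence forms a singleton root SCC of type~1, and since $x_i(k)=x_i(0)$ for every $i\in\mathcal C$ its steady state has modulus $|x_i(0)|\le B$. The hypothesis $\bigcup_{h=1}^{n_1^c}\mathcal V_{1h}=\mathcal C$ says precisely that the first level of $\mathcal G^c$ consists only of leaders, so there are no type-2 or type-3 RSCCs and $\|\bar x_{1h}\|_\infty\le B$ for all $h=1,\dots,n_1^c$. This is exactly where the hypothesis is consumed: a type-1 or type-2 non-leader RSCC would, by Theorem~\ref{thm:1}, contribute a (possibly polarized) combination of non-leader initial states that need not respect the bound $B$.

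Next I would argue by induction on the level $l$, assuming $\|\bar x_{\lambda i}\|_\infty\le B$ for every SCC of every level $\lambda<l$ lying in the upstream of the SCC under consideration. Fix a non-root SCC $sp$ with $s\ge2$. Since $sp$ is not a root, the nonnegative irreducible matrix $|A_{sp}|$ has at least one deficient row, because by Assumption~\ref{ass:adjacency} the within-SCC absolute row sum of any node receiving an upstream link is strictly below $1$; hence $\rho(|A_{sp}|)<1$, so $\rho(A_{sp})\le\rho(|A_{sp}|)<1$, $I-A_{sp}$ is invertible, and the recursion in \eqref{eq:algo} is well posed. Written componentwise, the fixed-point identity behind \eqref{eq:algo} reads $\bar x_r=\sum_{j\in sp}a_{rj}\bar x_j+\sum_{j\ \mathrm{upstream}}a_{rj}\bar x_j$ for $r\in sp$, with $\sum_j|a_{rj}|=1$, so every steady state of $sp$ is an absolute-convex combination of its within-SCC and upstream steady states.

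The main obstacle, and the only delicate step, is to convert these convex-combination identities into the uniform bound $\|\bar x_{sp}\|_\infty\le B$, which I would handle by a discrete maximum-principle argument. Let $M=\max_{r\in sp}|\bar x_r|$ and suppose, for contradiction, that $M>B$. For any node $r$ attaining $M$, applying the triangle inequality to $\bar x_r=\sum_j a_{rj}\bar x_j$, together with $\sum_j|a_{rj}|=1$, the bound $|\bar x_j|\le M$ for within-SCC neighbors and $|\bar x_j|\le B<M$ for upstream neighbors, forces $r$ to have no upstream neighbor and forces $|\bar x_j|=M$ for every within-SCC in-neighbor $j$ of $r$. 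Thus the set of nodes attaining $M$ is closed under taking within-SCC in-neighbors; by strong connectivity of $sp$ it must be all of $sp$, whence no node of $sp$ has an upstream link, contradicting the fact that $sp$ is a non-root component. Therefore $M\le B$, which closes the induction and shows that all $n-m$ non-leaders are asymptotically signed contained, i.e.\ the network is signed contained. \QEDA
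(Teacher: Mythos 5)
Your proof is correct and follows the same route as the paper, whose own proof is simply the one-line assertion that the claim follows from Theorem~\ref{thm:algo}: you invoke Theorems~\ref{thm:1} and~\ref{thm:algo} for convergence and for the recursive structure of the steady states, exactly as the paper intends. The discrete maximum-principle argument you add (using $\sum_j|a_{rj}|=1$, irreducibility of each non-root block, and the strict row-sum deficiency at nodes with upstream in-neighbors to propagate the bound $B$ level by level) is the quantitative step the paper leaves implicit, and it is sound.
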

\begin{proof}
the thesis directly follows from Theorems \ref{thm:algo}.
\end{proof}
The above corollary means that the network is signed contained if the leaders set $\mathcal{C}$ is connected to each of the $f$ SCCs of the graph of the followers, that is, the subgraph $\mathcal{F}$ induced by the node set $\mathcal{V}-\mathcal{C}$. This implies that, to guarantee signed containment, the number of outgoing edges $d$ from the leaders has to be equal or higher than $f$. The following corollary gives sufficient conditions guaranteeing asymptotic containment of a given SCC of $\mathcal{G}$.
\begin{coro}\label{cor:scc}
For all $l\hspace{-1mm}=\hspace{-1mm}2,\ldots,\ell$, $h\hspace{-1mm}=\hspace{-1mm}1,\ldots,n_c^l$, the $h$-th SCC of the $l$-th level of $\mathcal G$ is signed contained if $\cup_{k\in\Upsilon_{lh}^{\mathcal{G}^c}}\mathcal{V}_{1k}\subseteq\mathcal C$.
\end{coro}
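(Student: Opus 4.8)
The plan is to prove that every node $i\in\mathcal{V}_{lh}^c$ is asymptotically signed contained, i.e. $\limsup_{k\to\infty}|x_i(k)|\le M$ with $M:=\max_{j\in\mathcal{C}}|x_j(0)|$, which is exactly the assertion that $\mathcal{G}_{lh}^c$ is signed contained. Since $\mathcal{G}^c$ is a DAG, all influence reaching $\mathcal{G}_{lh}^c$ originates in its upstream, so $\mathcal{G}_{lh}^c$ together with $\Upsilon_{lh}^{\mathcal{G}^c}$ forms a closed subsystem and I may invoke the algorithm of Theorem \ref{thm:algo}, which both computes the steady states $\bar{x}_{lh}$ and guarantees $x_i(k)\to\bar{x}_{lh,i}$. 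It therefore suffices to show $\|\bar{x}_{sp}\|_\infty\le M$ for every upstream SCC, that is, for all $p\in\mathcal{J}_s(lh)$, $s=1,\ldots,l$. I will establish this by induction on the level $s$.

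For the base case $s=1$, the hypothesis $\cup_{k\in\Upsilon_{lh}^{\mathcal{G}^c}}\mathcal{V}_{1k}\subseteq\mathcal{C}$ forces every level-$1$ SCC in the upstream to consist of leaders; by Assumption \ref{ass:adjacency} a leader carries only a unit self-loop and no other incoming edge, so it is a singleton type-$1$ SCC whose steady state equals its constant initial value. Hence $\|\bar{x}_{1p}\|_\infty\le M$ for every $p\in\mathcal{J}_1(lh)$, and no type-$2$ or type-$3$ initialization of Theorem \ref{thm:algo} is invoked.

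For the inductive step, fix an upstream SCC $sp$ with $s\ge 2$ and assume $\|\bar{x}_{\lambda i}\|_\infty\le M$ for every upstream SCC at a level $\lambda<s$. Rewriting the recursion of Theorem \ref{thm:algo} as the fixed-point identity $\bar{x}_{sp}=A_{sp}\bar{x}_{sp}+w$ with $w:=\sum_{\lambda,i}A_{sp,\lambda i}\bar{x}_{\lambda i}$, I pass to absolute values. Because every in-neighbour of a node in $sp$ lies either inside $sp$ or in a strictly upstream SCC, Assumption \ref{ass:adjacency} gives, for each row $r$, $\sum_{r'}|(A_{sp})_{rr'}|+b_r=1$, where $b_r$ is the total absolute upstream weight of row $r$; thus $|A_{sp}|\mathbf{1}+b=\mathbf{1}$ with $b=(b_r)$. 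The inductive hypothesis yields $|w|\le bM$ componentwise, so that $(I-|A_{sp}|)\,|\bar{x}_{sp}|\le bM$.

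The crux, and the step I expect to be the main obstacle, is inverting this inequality while controlling the signs, since the naive componentwise estimate is vacuous exactly at a max-attaining node whose in-neighbours all lie inside $sp$. This is resolved using strong connectivity: as $sp$ sits at level $s\ge 2$ it possesses at least one node receiving upstream input, so $|A_{sp}|$ is an irreducible sub-stochastic matrix with at least one deficient row, whence $\rho(|A_{sp}|)<1$ by Perron--Frobenius, $I-|A_{sp}|$ is an M-matrix, and $(I-|A_{sp}|)^{-1}\ge 0$. Left-multiplying the inequality by this nonnegative inverse preserves it and gives $|\bar{x}_{sp}|\le (I-|A_{sp}|)^{-1}b\,M$. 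Finally the row-sum identity rearranges to $(I-|A_{sp}|)\mathbf{1}=b$, i.e. $(I-|A_{sp}|)^{-1}b=\mathbf{1}$, so $|\bar{x}_{sp}|\le M\mathbf{1}$ and $\|\bar{x}_{sp}\|_\infty\le M$. Specializing $sp=lh$ closes the induction and shows every node of $\mathcal{G}_{lh}^c$ is asymptotically signed contained.
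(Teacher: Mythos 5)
Your proof is correct and follows the same skeleton as the paper's: decouple the SCC together with its upstream from the rest of the network, and then invoke Theorem~\ref{thm:algo}. The difference is that the paper stops there, leaving implicit the step that actually establishes containment, namely that the steady states produced by the algorithm satisfy $\|\bar{x}_{sp}\|_\infty\le\max_{j\in\mathcal{C}}|x_j(0)|$; you supply that missing content with a clean level-by-level induction. Your key observations --- that under the hypothesis every level-$1$ upstream SCC is a leader singleton, that Assumption~\ref{ass:adjacency} gives the row-sum identity $|A_{sp}|\mathbf{1}+b=\mathbf{1}$, that irreducibility plus at least one deficient row makes $I-|A_{sp}|$ an M-matrix with nonnegative inverse, and that $(I-|A_{sp}|)^{-1}b=\mathbf{1}$ collapses the bound to $M\mathbf{1}$ --- are all sound and are exactly what a complete proof of Corollary~\ref{cor:scc} requires; they buy a genuinely self-contained argument where the paper offers only a pointer.
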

\begin{proof}
The dynamics of the nodes in any SCC of the network are decoupled by those of the nodes that are not in its upstream. Then, the thesis follows from Theorem \ref{thm:algo}.
\end{proof}
In other words, this means that if the RSCCs of the upstream of the considered SCC are (a subset of) the network leaders, then the SCC is contained.
\section{An algorithm for control design}
Given a network topology $\mathcal{G}$, the nodes that will be asymptotically signed contained may be more than those of the SCCs fulfilling the assumption of Corollary \ref{cor:scc}. However, this will depend on the initial conditions of the RSCCs of $\mathcal{G}$ that are not the network leaders. Therefore, if one aims at finding the optimal solution for problem \eqref{prob:targ_cont}, then the knowledge of the initial conditions of all the followers would be necessary for the leaders. In absence of this information, a suboptimal solution maximizing the number of nodes that are \emph{guaranteed} to be signed contained can be found. Specifically, rather then solving problem \eqref{prob:targ_cont}, we will focus on finding the optimal solution of the following problem:
\begin{equation}
\label{prob:part_cont}
\begin{aligned}
\mathcal{\hat L}(d)&= \arg \max_{\mathcal L} \left| \phi(\mathcal L)\right|\\
\text{s.t.}&\\
\left| \mathcal L \right|& = d,
\end{aligned}
\end{equation} 
where $\phi(\mathcal{L})$ is the subset of nodes of $\mathcal{G}$ belonging to SCCs fulfilling the assumptions of Corollary \ref{cor:scc}. 

Contrary to problem \eqref{prob:targ_cont}, the solution of this problem does not require a full exploration of the feasible solutions, and can be translated into an integer linear program (ILP), by adapting the procedure presented in \cite{dega18}. Indeed, by introducing the condensation $\mathcal{F}^c$ of the subgraph $\mathcal{F}$ of the followers, the algorithm solving problem \eqref{prob:part_cont} consists of the following steps:
\begin{enumerate}[(a)]
\item build a new graph $\overline{\mathcal{G}}=\{\overline{\mathcal V},\overline{\mathcal E}\}$ as follows:
\begin{itemize}
\item add to $\overline{\mathcal{V}}$ the set of roots $r_i$ of $\mathcal{F}^c$ and all the non-roots $\gamma_i$ of $\mathcal{F}^c$ that are in the downstream of no more than $d$ roots $r_i$;
\item for all pairs $\gamma_i,r_j\in\overline{\mathcal{V}}$, add an edge $(\gamma_i,r_j)$ to $\overline{\mathcal E}$, with associated binary variable $y_{ij}$, if in $\mathcal{F}^c$, $\gamma_i$ is in the downstream of $r_j$; 
\item add an additional node, $\pi$, representing the leader set $\mathcal{C}$, and connect it to all the $r_j$ in $\overline{\mathcal{V}}$ by adding a set of edges $(r_j,\pi)$ to $\overline{\mathcal{E}}$, with associated binary variable $y_{j\pi}$;
\end{itemize}
\item associate to all edges of the graph $\bar{\mathcal{G}}$ the following weights:
\begin{itemize}
\item $w_{ij} = |\gamma_i|$, $\forall i$, that is, all edges entering the $i$-th node $\gamma_i$ have a weight equal to the number of nodes in the SCC $\gamma_i$;
\item $w_{j\pi} = |r_j|$, $\forall j$, that is, all edges entering the $j$-th root $r_j$ have a weight equal to the number of nodes in the SCC $r_j$;
\end{itemize}
\item solve the following ILP:
\begin{align}\label{eq:cdm}
& \underset{y}{\max} \sum_i\sum_j w_{ij}y_{ij} + \sum_j w_{j\pi}y_{j\pi}\\
& \text{s.t.} \nonumber\\ \label{eq:num_pinned}
& \sum_j y_{j\pi} = d\\ \label{eq:SCC_count}
& \sum_i y_{ij} \leq k^{\mathrm{out}}_j y_{j\pi} \qquad\ \, \forall j\\ \label{eq:all_r_pinned}
k_i^{\mathrm{in}} & \sum_j y_{ij} \leq \sum_{j | \exists y_{ij} }y_{j\pi} \quad\ \ \ \forall i\\
& y_{ij}, y_{j\pi} \in \lbrace 0,1 \rbrace \qquad\quad\ \forall i,j 
\end{align}
\end{enumerate}
where $k_i^{\mathrm{in}}$ and $k_i^{\mathrm{out}}$ are the in- and out-degree of the $i-th$ node of graph $\overline{\mathcal{G}}$, respectively.

Let us briefly illustrate the procedure outlined above. We first create a new graph $\overline{\mathcal{G}}$, whose nodes are either RSCC of the subgraph of  the followers, or SCCs in the downstream of such RSCCs. Each node representing a RSCC is connected to the SCCs in its downstream. Notice that we do not include any node representing a SCC that has more than $d$ RSCCs in its upstream, and thus cannot be guaranteed to be contained according to Corollary \ref{cor:scc}. 
Then, we add an extra node $\pi$ to $\overline{\mathcal{G}}$ representing the set of leaders, and we connect it to all nodes $r_i$ representing the RSCCs. Finally, we associate to each edge in $\overline{\mathcal{G}}$ a weight equal to the number of nodes in the (R)SCC it points to. The solution of the ILP in \eqref{eq:cdm}-\eqref{eq:all_r_pinned} is then equivalent to determine the RSCCs that have to be directly controlled, together with the corresponding SCCs that are guaranteed to be contained. Namely, SCC $\gamma_i$ is contained for all possible initial conditions if there exists a $j$ such that $y_{ij} = 1$, and RSCC $r_j$ will be directly controlled if $y_{j\pi} = 1$. Accordingly, the objective function to be maximized in \eqref{eq:cdm} represents the total number of nodes that we can guarantee to contain according to Corollary \ref{cor:scc}. The constraint \eqref{eq:num_pinned} guarantees that the directly controlled nodes are $d$, while  \eqref{eq:SCC_count} that all the contained nodes are in the downstream of (some of) the leaders. Finally, eq. \eqref{eq:all_r_pinned} imposes that the nodes of an SCC are contained only if a node in each of the RSCCs in their upstream is directly controlled by one of the leaders.
\begin{rem}\label{rem:final}
Notice that our algorithm only determines which RSCCs of $\mathcal{F}$ have to be connected to the set of leaders. Indeed, the selection of the specific node of each RSCC, and the leader connected to it, is indifferent to the objective function of problem \eqref{prob:part_cont}. Therefore, this selection will be performed randomly in the numerical example that follows. Clearly, the selection may indeed impact on both the convergence rate and on the width of the convex hull in which the followers are asymptotically contained. However, the investigation of these aspects goes beyond the scope of the present work.
\end{rem}
\subsection*{Numerical example}
\begin{figure}
\centering
\includegraphics[width=\columnwidth]{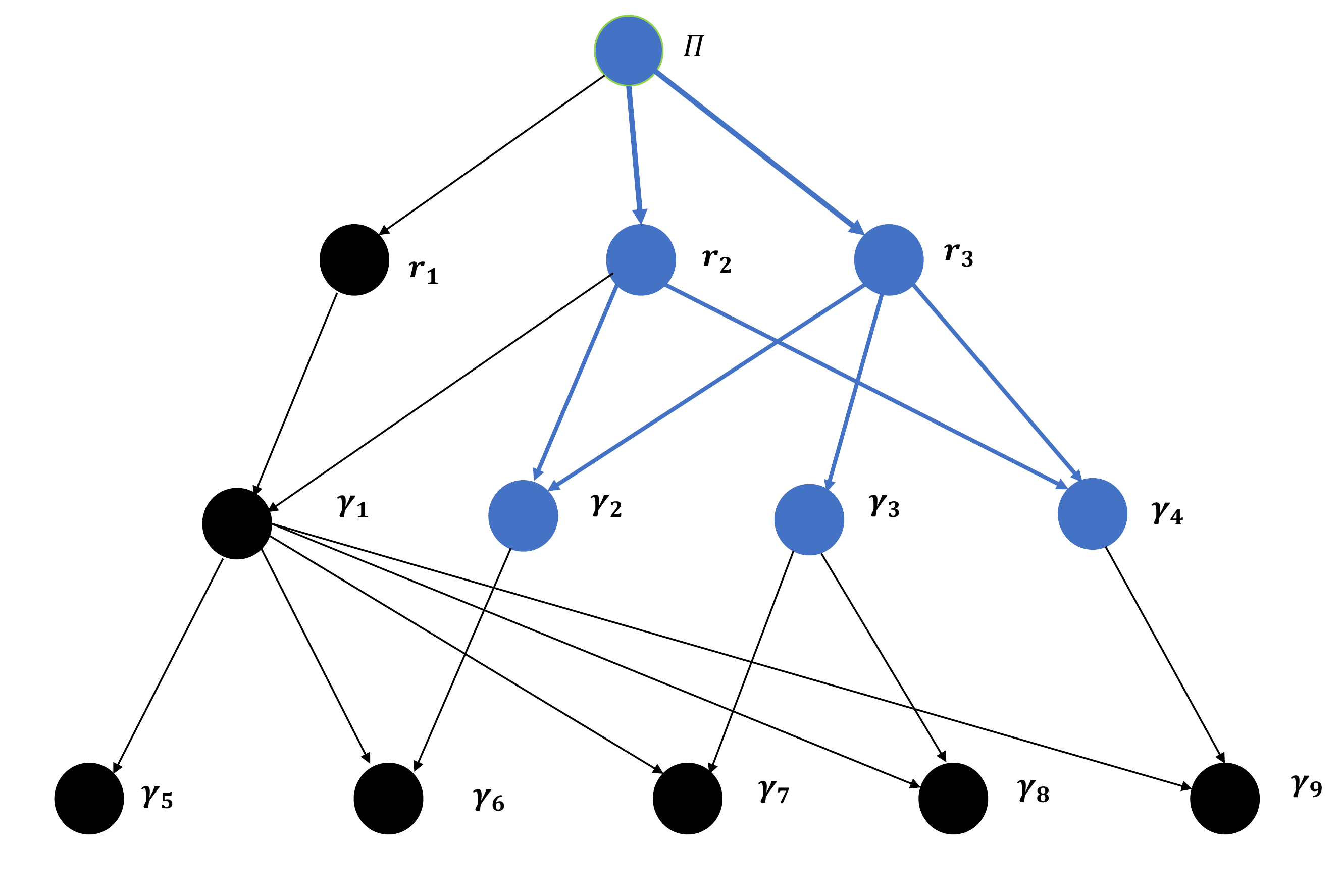}
\caption{Graph $\bar{\mathcal G}$ associated to the signed graph $\mathcal{G}$ in the numerical example. The (R)SCCs of $\mathcal{G}$ that are guaranteed to be asymptotically contained are depicted in blue, while the remaining (R)SCCs are in black.}\label{fig:opt_sel}
\end{figure}
We consider a signed graph $\mathcal{G}$ of $N=1500$ nodes distributed over $4$ levels and $15$ SCCs, whose dynamics follow equation \eqref{eq:signed_uncontrolled}. We assume that only $d=2$ control inputs can be deployed by the 3 leaders of the network. 
Following the steps of the algorithm, we first build the graph $\bar{\mathcal{G}}$, which is depicted in Figure \ref{fig:opt_sel}. Then, we solve the ILP  \eqref{eq:cdm}-\eqref{eq:all_r_pinned} and find that the leaders should directly control the RSCCs denoted by $r_2$ and $r_3$ in Figure \ref{fig:opt_sel} to maximize $|\phi|$, that is, the number of followers that are contained regardless of the initial conditions of the network. 
The optimum value of the objective function of problem \eqref{prob:part_cont}
is $\vert \phi(\hat{ \mathcal{L}}(2))\vert=508$. To validate our results, we simulated the system with the same leaders' states ($[-1, 0.5, 1]^T$) and two different sets of initial conditions, randomly selected from a uniform distribution in $\left[-20;20\right]$. In both cases, the nodes in $\phi(\hat{ \mathcal{L}}(2))$ (depicted in blue in Figure \ref{fig:state}) are asymptotically contained. Then, depending on the specific selection of the initial conditions, further nodes of the network might be asymptotically contained, as in the two simulations $\vert \mathcal{K}({ \mathcal{\hat L}}(2))\vert$ is equal to 516 and 843, respectively, see Figure \ref{fig:state}.


\begin{figure}
\centering
\subfigure{}{\includegraphics[width=0.8\columnwidth]{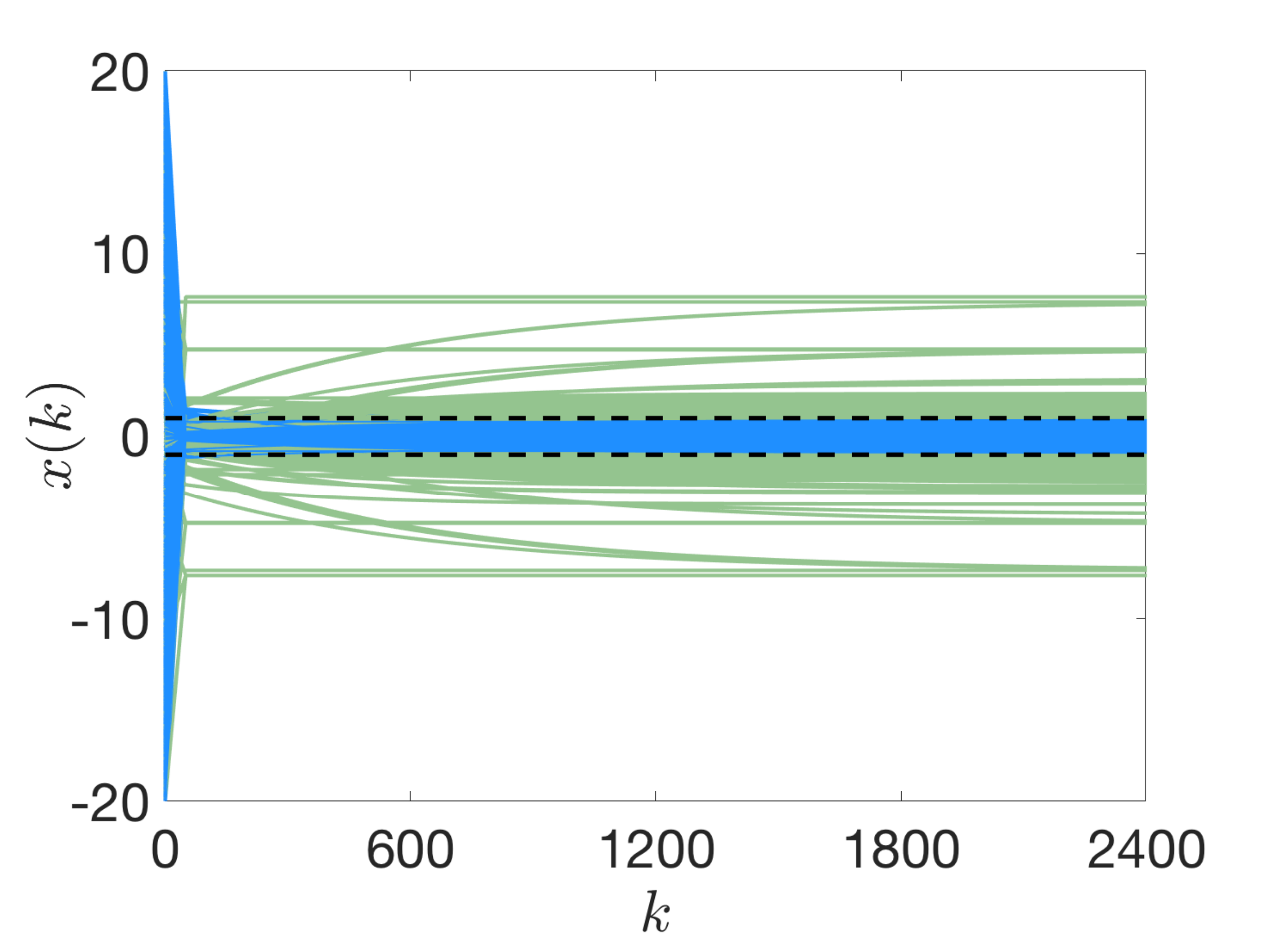}}\label{fig:syst1}
\subfigure{}{\includegraphics[width=0.8\columnwidth]{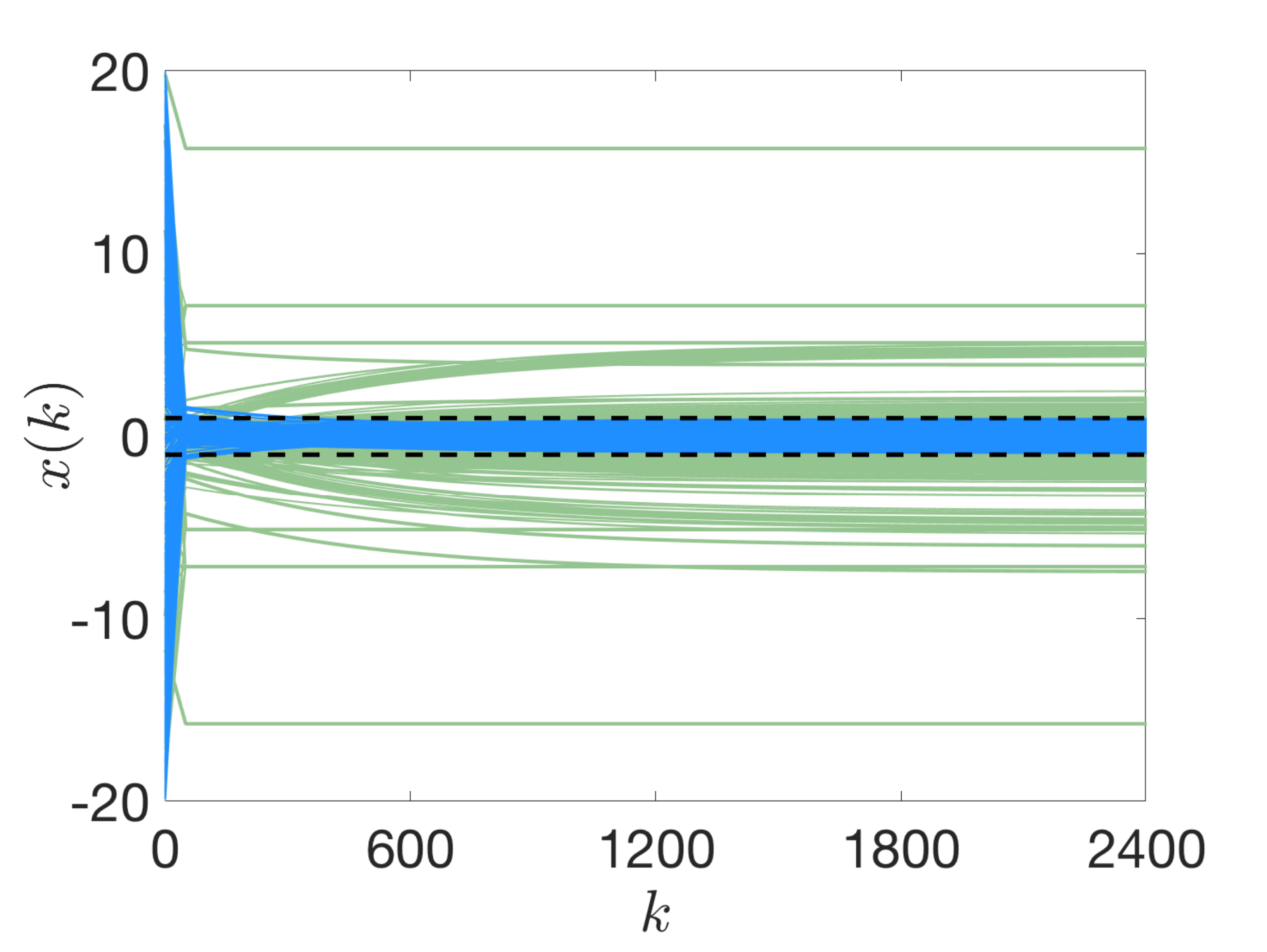}}\label{fig:syst1}
\caption{Two simulations of the network dynamics with leader states ${x}_{\mathcal{C}}\hspace{-1.5mm}=\hspace{-1.5mm}\left[ -1,0.5, 1 \right]^T$. The two simulations differ for the initial conditions of the followers, which are randomly selected from a uniform distribution in $\left[-10;10\right]$. The dotted black lines delimit the region where the leader aim at containing the followers, whose trajectories are in blue if they belong to SCCs fulfilling the assumptions of Corollary \ref{cor:scc}, while they are in green otherwise. In the top panel, the total number of asymptotically contained nodes is 516, while they are 843 in the bottom panel.}
\label{fig:state}
\end{figure}

\section{Conclusion}
In this paper, we tackled the containment control problem in a multi-agent discrete-time system where the interactions can be both cooperative and antagonistic. In particular, we focused on the case in which the containment of the entire network is prohibited by constraints on the number of control inputs the leaders can exert on the follower. The partial containment control problem was then defined as searching for the optimal deployment of the available control inputs so as to maximize the number of contained nodes. A preliminary graphical study, based on two alternative condensations of the original graph, allowed the derivation of the conditions guaranteeing the containment of the atomic element of a directed network, that is, a strongly connected component. Leveraging the convergence analyses, an algorithm for maximizing the number of followers we can guarantee to contain was built. Our solution strategy was translated into an integer linear program, and its effectiveness was demonstrated on a testbed examples. Future work will extend this analysis to alternative scenarios in which, for instance, the leaders may not cooperate and have contrasting goals.


\bibliographystyle{IEEEtran}


\end{document}